\newtheorem{theorem}{Theorem}[section]
\newtheorem{proposition}[theorem]{Proposition}
\newtheorem{corollary}[theorem]{Corollary}
\theoremstyle{definition}
\newtheorem{definition}[theorem]{Definition}
\theoremstyle{remark}
\newtheorem{remark}[theorem]{Remark}
\numberwithin{equation}{section}
\newcommand{\ra}{\rightarrow}
\newcommand{\C }{ \mathbb{C}}
\newcommand{\Z}{\mathbb{Z}}
\newcommand{\Q}{\mathbb{Q}}
\newcommand{\Dh}{\mathcal{D}} 
\newcommand{\subsetneq}{\subset_{_{_{{\!\!\!\!\!\not{=}}}}}}
\newcommand{\supsetneq}{\supset_{_{_{{\!\!\!\!\!\not{=}}}}}}
\begin{document}

\title{The dihedral group $\Dh_5$ as group of symplectic automorphisms on K3 surfaces}
\author{Alice Garbagnati}

\address{Dipartimento di Matematica, Universit\`a di Milano,
  via Saldini 50, I-20133 Milano, Italia}
\email{alice.garbagnati@unimi.it}
\thanks{I would like to thank Bert van Geemen, who helps and
encourages me during the preparation of this paper and Alessandra
Sarti for several interesting discussions. I also thank the referee for very useful comments}

\subjclass[2000]{Primary 14J28, 14J50}

\keywords{K3 surfaces, symplectic automorphisms, dihedral groups, lattices.}

\begin{abstract}
We prove that if a K3 surface $X$ admits $\Z/5\Z$ as group of
symplectic automorphisms, then it actually admits $\Dh_5$ as group
of symplectic automorphisms. The orthogonal complement to the
$\Dh_5$-invariants in the second cohomology group of $X$ is a rank
16 lattice, $L$. It is known that $L$ does not depend on $X$: we
prove that it is isometric to a lattice recently described by R.
L. Griess Jr. and C. H. Lam. We also give an elementary
construction of $L$.
\end{abstract}

\maketitle
\markboth{ALICE GARBAGNATI}{THE DIHEDRAL GROUP $\Dh_5$ AS GROUP OF SYMPLECTIC AUTOMORPHISMS ON K3}

\section{Introduction}

A finite group of symplectic automorphisms on a K3 surface $X$ has
the property that the desingularization of the quotient of $X$ by
this group is again a K3 surface. In \cite{Nikulin symplectic} the
finite abelian groups of symplectic automorphisms on a K3 surface
are classified. The main result of Nikulin in \cite{Nikulin
symplectic} is that the isometries induced by finite abelian
groups of symplectic automorphisms on the second cohomology group
of a K3 surface are essentially unique. The uniqueness of the
isometries induced by $G$ on $H^2(X,\Z)$ implies that the lattice
$\Omega_G:=(H^2(X,\Z)^G)^{\perp}$ does not depend on $X$. Thanks
to this result it is possible to associate  the lattice
$\Omega_G$ to each finite abelian group $G$ of symplectic automorphisms on a K3 surface. From this one obtains information on the coarse moduli
space of K3 surfaces admitting $G$ as group of symplectic
automorphisms (cf. \cite{Nikulin symplectic}, \cite{symplectic
prime}, \cite{symplectic not prime}). In \cite{symplectic prime}
and \cite{symplectic not prime} the lattices $\Omega_G$ are
computed for each finite
abelian group $G$ of symplectic automorphisms on a K3 surface.\\
In \cite{mukai} and \cite{Xiao} the finite (not necessary abelian)
groups of symplectic automorphisms on a K3 surface are classified. Under some conditions (cf. Section \ref{section: symplectic automorphisms on K3 surfaces}) Nikulin's result, on the uniqueness of the isometries induced by
finite groups of symplectic automorphisms on the second cohomology
group of the K3 surfaces, can be extended to finite (not
necessary abelian) groups (cf. \cite{whitcher}). As a
consequence one can attach the lattice
$\Omega_G:=(H^2(X,\Z)^G)^{\perp}$, which depends only on $G$, also to some finite non abelian groups $G$
of symplectic automorphisms on a K3 surface $X$.\\

Let us consider a pair of finite groups $(G,H)$ such that $G$ acts symplectically on a K3 surface and $H$ is a subgroup of $G$. It is evident that the K3 surfaces admitting $G$ as group of symplectic automorphisms, admits also $H$ as group of symplectic automorphisms. It is more surprising that for certain pairs of groups $(G,H)$ also the viceversa holds indeed for certain pair $(G,H)$ the condition ``a K3 surface  $X$ admits $G$ as group of symplectic automorphism" is equivalent to the condition ``$X$ admits $H$ as group of symplectic automorphisms". For these pairs the lattices $\Omega_G$ and $\Omega_H$ coincide. The aim of this paper is to describe this situation and to give explicitely one pair $(G,H)$. We observe that in order to find $(G,H)$ with the described property, one has to consider non abelian groups acting symplectically on K3 surfaces, indeed the lattices $\Omega_K$ associated to abelian groups $K$ are completely described in \cite{symplectic prime}, \cite{symplectic not prime} and one can check that $\Omega_G$ and $\Omega_H$ never coincide if both $G$ and $H$ are abelian and $G\neq H$.\\

In the {\it Section} \ref{section: symplectic automorphisms on K3 surfaces} we describe some known results on symplectic automorphisms over k\"ahlerian K3 surfaces and we prove our main results (Proposition \ref{prop: G as symp group iff H as symp group} and Corollary \ref{cor: Z/5 iff Dh5}).
In the Proposition \ref{prop: G as symp group iff H as symp group} we give sufficient conditions on $G$ and $H$ to prove that a K3 surface admits $G$ as group of symplectic automorphisms if and only if it admits $H$ as group of symplectic automorphisms.  Applying this proposition we prove (Corollary \ref{cor: Z/5 iff Dh5}) that a K3 surface admits $\Z/5\Z$ as group of symplectic automorphisms if and only if it admits $\Dh_5$ (the dihedral group of order 10) as group of symplectic automorphisms. In particular we prove that $\Omega_{\Z/5\Z}\simeq \Omega_{\Dh_5}$\\
In \cite{symplectic prime} the isometry induced on $\Omega_{\Z/5Z}$ by a symplectic automorphim of order 5, is described. Since we prove that $\Omega_{\Z/5\Z}\simeq\Omega_{\Dh_5}$, there is also an involution acting on this lattice. In order to describe both the isometry of order 5 and the involution generating $\Dh_5$ on $\Omega_{\Dh_5}$, in the {\it Section} \ref{section: construction of omega5} we give a different description of this lattice: it is an overlattice of $A_4(-2)^{\oplus 4}$ (the isometry of order 5 is induced by the natural one on $A_4$). In the proof of the Corollary \ref{cor: omegadh5 is isomteric to DIH10(16)} also the action of the involution is described. Moreover we show that the lattice $\Omega_{\Z/5\Z}$ (computed in \cite{symplectic prime}) is isometric to a lattice describe by Griess and Lam in \cite{EE8- lattices and dihedral}.\\
In the {\it Section} \ref{section: examples} we consider algebraic K3 surfaces and in particular 3-dimensional families of K3 surfaces admitting a symplectic automorphisms of order 5, $\sigma_5$, and a polarization, invariant under $\sigma_5$. It follows from the results of the Section \ref{section: symplectic automorphisms on K3 surfaces} that the K3 surfaces in these families admit also an involution $\iota$, generating together with $\sigma_5$ the dihedral group $\Dh_5$. For each of these families we exhibit the automorphism $\sigma_5$ and we find the automorphism $\iota$.

\section{Symplectic automorphisms on K3
surfaces.}\label{section: symplectic automorphisms on K3 surfaces}
\begin{definition} Let $X$ be a smooth compact complex surface. The surface $X$ is a K3 surface if the canonical bundle of $X$ is trivial and the irregularity of $X$, $q(X):=h^{1,0}(X)$, is 0.
\end{definition}
The second cohomology group of a K3 surface, equipped with the cup product, is isometric to a lattice, which is the unique, up to isometries, even unimodular lattice with signature $(3,19)$. This lattice will be denoted by $\Lambda_{K3}$ and is isometric to $U\oplus U\oplus U\oplus E_8(-1)\oplus E_8(-1)$, where $U$ is the unimodular lattice with bilinear form $\left[\begin{array}{ll}0&1\\1&0\end{array}\right]$ and $E_8(-1)$ is the lattice obtained multipying by $-1$ the lattice associated to the Dynking diagram $E_8$.\\
The N\'eron Severi group of a K3 surface $X$, $NS(X)$, coincide with its Picard group. The transcendental lattice of $X$, $T_X$, is the orthogonal to $NS(X)$ in $H^2(X,\Z)$.  
\begin{definition} An isometry $\alpha$ of $H^2(X,\Z)$ is an effective isometry if it preserves the K\"ahler cone of $X$. An isometry $\alpha$ of $H^2(X,\Z)$ is an Hodge isometry if its $\C$-linear extension to $H^2(X,\C)$ preserves the Hodge decomposition of $H^2(X,\C)$. \end{definition}
\begin{theorem}{\rm (\cite{Burns Rapoport torelli theorem K3})} Let $X$ be a K3 surface and $g$ be an automorphism of $X$, then $g^*$ is an effective Hodge isometry of $H^2(X,\Z)$. Viceversa, let $f$ be an effective Hodge isometry of $H^2(X,\Z)$, then $f$ is induced by an automorphism of $X$.\end{theorem}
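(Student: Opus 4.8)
The plan is to prove the two implications of this (strong Torelli) theorem separately.

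First I would dispose of the easy direction: if $g$ is an automorphism of $X$, then $g^*$ preserves the cup product and so is an isometry of $H^2(X,\Z)$; since $g$ is holomorphic, the $\C$-linear extension of $g^*$ respects the Hodge decomposition of $H^2(X,\C)$ and commutes with complex conjugation, so $g^*$ is a Hodge isometry; and $g^*$ sends a K\"ahler class to a K\"ahler class (the pull-back of a K\"ahler form by a biholomorphism is again a K\"ahler form), hence preserves the K\"ahler cone. Therefore $g^*$ is an effective Hodge isometry.

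For the converse I would argue by reduction to the Torelli theorem for \emph{marked} K3 surfaces. Given an effective Hodge isometry $f$ of $H^2(X,\Z)$, I would fix a marking $\phi\colon H^2(X,\Z)\to\Lambda_{K3}$ and put $\psi:=\phi\circ f$, a second marking of $X$. Since $f$, being a Hodge isometry, preserves the one-dimensional space $H^{2,0}(X)$, the two marked surfaces $(X,\phi)$ and $(X,\psi)$ have the same period point in $\Omega=\{[\omega]\in\PP(\Lambda_{K3}\otimes\C)\mid \omega\cdot\omega=0,\ \omega\cdot\bar\omega>0\}$, and the comparison isometry $\phi^{-1}\circ\psi=f$ between the markings is effective by hypothesis; marked Torelli would then produce an automorphism $g$ of $X$ (the two underlying surfaces coincide) with $g^*=\phi^{-1}\circ\psi=f$. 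It remains to prove the marked Torelli theorem, and here I would follow the classical route through three ingredients. (i) \emph{Local Torelli}: the period map from the (non-Hausdorff) moduli space of marked K3 surfaces to $\Omega$ is a local isomorphism; I would deduce this from the unobstructedness of deformations of a K3 surface, which makes the Kuranishi family smooth of dimension $20=\dim_\C\Omega$, together with injectivity of the differential of the period map, a cup-product computation that exploits the isomorphism $T_X\simeq\Omega^1_X$ induced by a nowhere-zero holomorphic $2$-form. (ii) \emph{Surjectivity of the period map}: every point of $\Omega$ is a period, obtained by combining the openness coming from local Torelli with an explicit dense supply of periods realised by Kummer surfaces (or, alternatively, with semistable-degeneration and properness arguments). (iii) \emph{Torelli for Kummer surfaces}: the Hodge structure on $H^2(\mathrm{Km}(A),\Z)$ determines that on $H^2(A,\Z)$, whence the elementary Torelli theorem for complex $2$-tori applies. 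With (i)--(iii) available, I would finish by a connectedness argument: in a connected family of marked K3 surfaces the locus along which two distinguished fibres carrying the same period become isomorphic is open by (i), is closed, and is nonempty because it contains a Kummer point by (iii), hence is all of the base; the final point — that the resulting isomorphism can be chosen to induce exactly the prescribed $f$ — is where the effectiveness hypothesis enters, via the fact that the K\"ahler cone is a fundamental domain for the Weyl group generated by the reflections in the $(-2)$-classes.

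The hard part will be ingredient (ii) together with the connectedness/propagation step, rather than the Hodge-theoretic bookkeeping: proving surjectivity of the period map and threading an isomorphism through a family require genuine geometric input — the abundance of Kummer surfaces and the combinatorial structure of the K\"ahler cone — whereas the passage between an effective Hodge isometry and a compatible pair of markings is purely formal.
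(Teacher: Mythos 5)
This statement is quoted in the paper as a known theorem, with no proof supplied beyond the citation of Burns--Rapoport, so there is no internal argument to compare yours against; the appropriate ``proof'' in context is the reference itself. Your easy direction is correct and complete. Your converse follows the standard strategy for the strong Torelli theorem: reduce to marked Torelli, then assemble local Torelli, surjectivity of the period map, density of Kummer periods, Torelli for complex tori via Kummer surfaces, and a propagation argument in which the effectiveness hypothesis is consumed by the fact that the K\"ahler cone is a fundamental domain for the Weyl group generated by reflections in the $(-2)$-classes. As a roadmap this is accurate, and you correctly identify where the real difficulty sits.

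Be aware, though, that what you have written is a roadmap and not a proof: each of your ingredients (i)--(iii) is itself a substantial theorem, and the propagation step is not literally the ``open, closed, nonempty in a connected base'' argument you describe. The moduli space of marked K3 surfaces is non-Hausdorff, so one cannot simply take the locus where two fibres become isomorphic and argue by connectedness; the actual argument compares two families over a polydisc whose period maps agree, and the passage to the limit (the ``closedness'') is exactly the delicate point, requiring control of how the K\"ahler cone and the effective isometry vary in the family. Handling this in the non-projective K\"ahler case --- which is the case needed in this paper, since the K3 surfaces are a priori only K\"ahler --- is precisely the contribution of the cited reference. So your plan is sound as an outline of the literature, but none of the genuinely hard content is carried out, and for the purposes of this paper the correct move is the one the author makes: cite the theorem.
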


\begin{definition} An automorphism $\sigma$ on a K3 surface $X$ is symplectic if $\sigma^*$ acts as the identity on $H^{2,0}(X)$, that is $\sigma^*(\omega_X)=\omega_X$, where $\omega_X$ is a nowhere vanishing holomorphic two form on $X$.\\ Equivalently $\sigma$ is symplectic if the isometry induced by $\sigma^*$ on the transcendental lattice is the identity. \end{definition}

\begin{remark}{\rm Let $\sigma$ be an automorphism of finite order on a K3 surface. The desingularization of $X/\sigma$ is a K3 surface if and only if $\sigma$ is symplectic.}\end{remark}

In \cite{Nikulin symplectic} the finite abelian groups acting symplectically on a k\"ahlerian K3 surface are analyzed. Now it is known that every K3 surface is a K\"ahler variety, \cite{Siu}, so there are no restrictions on the K3 surfaces analyzed in \cite{Nikulin symplectic}.\\
From now on we always assume that $G$ is a finite group of symplectic automorphisms on the K3 surface $X$.
\begin{definition} Let the K3 surface $Y$ be the minimal desingularization of the quotient $X/G$. Let $M_j$ be the curves arising form the desingularization of the singularities of $X/G$.
\end{definition}

The singularities of the quotient $X/G$ are computed by Nikulin (\cite[Sections 6]{Nikulin symplectic}) if $G$ is an abelian group, and by Xiao (\cite[Table 2 ]{Xiao}) for all the other finite groups. If either $G=Q_8$ (binary dihedral group of order 8) or $G=T_{24}$ (binary tetrahedral group of order 24), then there are two possible configurations for the singularities of $X/G$, and hence for the exceptional curve $M_j$ on $Y$. For all the other groups the number and the type of the singularities of $X/G$ are determined by $G$.

\begin{definition} Let us assume that $G\neq Q_8$, $G\neq T_{24}$. The minimal primitive sublattice of $NS(Y)$ containing the curves $M_j$ does not depend on $X$ (cf. \cite{Nikulin symplectic}, \cite{Xiao}). It will be denoted by $M_G$.\end{definition}
The lattice $M_G$ is computed by Nikulin (\cite[Section 7]{Nikulin symplectic}) for each abelian group $G$, and by Xiao (\cite[Table 2]{Xiao}) for the all the other gorups $G$.
\begin{remark}\label{rem: Nikulin lattice} For $G=\Z/2\Z$, the lattice $M_{\Z/2\Z}$ (called Nikulin lattice) is an even overlattice of index 2 of $A_1(-1)^{\oplus 8}$. Its discriminant group is $(\Z/2\Z)^6$ (cf. \cite{Nikulin symplectic}) and its discriminant form is the same as $U(2)^{\oplus 3}$ (cf. \cite{morrison}).\end{remark}
\begin{definition}{\rm (\cite[Definition 4.6]{Nikulin symplectic})} We say that $G$ has a unique action on $\Lambda_{K3}$ if, given two embeddings $i: G\hookrightarrow Aut(X)$, $i': G\hookrightarrow Aut(X')$ such that $G$ is a group of symplectic automorphisms on the K3 surfaces $X$ and $X'$, there exists an isometry $\phi:H^2(X,\Z)\ra H^2(X',\Z)$ such that $i'(g)^*=\phi\circ i(g)\circ \phi^{-1}$ for all $g\in G$.\end{definition}

\begin{theorem}\label{theorem: unique action}{\rm (\cite[Theorem 4.7]{Nikulin symplectic}, \cite[Corollary 3.0.1]{whitcher})} Let $G$ be a finite group acting symplectically on a K3 surface, $G\neq Q_8$, $G\neq T_{24}$. If $M_G$ admits a unique primitive embedding in $\Lambda_{K3}$, then $G$ has a unique action on $\Lambda_{K3}$.\end{theorem}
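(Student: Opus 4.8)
The plan is to follow Nikulin's argument and reduce the statement to a purely lattice-theoretic one. Unwinding the definition of ``unique action'', from the two symplectic embeddings $i\colon G\hookrightarrow\mathrm{Aut}(X)$ and $i'\colon G\hookrightarrow\mathrm{Aut}(X')$ I must produce an isometry $\phi\colon H^2(X,\Z)\ra H^2(X',\Z)$ with $\phi\circ i(g)^*=i'(g)^*\circ\phi$ for all $g$, i.e.\ I must show that the $G$-lattice $H^2(X,\Z)\cong\Lambda_{K3}$ is determined by $G$ up to $G$-equivariant isometry. Since $G$ is symplectic, $\omega_X$ is fixed, so $T_X$ and a K\"ahler class lie in $H^2(X,\Z)^G$; hence $\Omega_G:=(H^2(X,\Z)^G)^\perp$ is a negative definite primitive sublattice of $NS(X)$, the group $G$ acts trivially on $H^2(X,\Z)^G=\Omega_G^\perp$, and the whole nontrivial part of the $G$-action is concentrated on $\Omega_G$. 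So it suffices to establish: (i) that $\Omega_G$ \emph{together with} its $G$-action does not depend on $X$; and (ii) that $\Omega_G$ has a unique primitive embedding in $\Lambda_{K3}$.

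For (i) I would argue in two steps. First, rationally: for $g\neq e$ the topological Lefschetz fixed point formula gives $\mathrm{tr}(g^*|H^2(X,\Z))=\#\mathrm{Fix}(g)-2$, and the number of fixed points of a symplectic automorphism on a K3 surface depends only on its order (Nikulin, Xiao); hence the character of $H^2(X,\Q)$, and with it the $\Q[G]$-module $\Omega_G\otimes\Q$ and its rank $r_G$, are determined by $G$. Second, the integral refinement, which I expect to be the main obstacle: to see that the lattice $\Omega_G$ itself and the $G$-action on it are independent of $X$ one passes to the minimal resolution $Y$ of $X/G$, a K3 surface whose exceptional curves generate a sublattice of $NS(Y)$ with primitive closure $M_G$, and exploits the quotient $X\to X/G$ of degree $|G|$ together with the resolution $Y\to X/G$: these relate $H^2(X,\Z)^G$ to the orthogonal complement of $M_G$ in $H^2(Y,\Z)$, and thereby tie the genus of $\Omega_G$ to that of $M_G$. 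Since $M_G$ is known to be independent of $X$ (\cite{Nikulin symplectic}, \cite{Xiao}), the same follows for $\Omega_G$ and its action. The delicate point is exactly this comparison — keeping track of the finite-index gluings among $\Omega_G$, $\Omega_G^\perp$, $M_G$ and $M_G^\perp$ and of their discriminant forms — and it is here that the hypothesis on $M_G$ is forced in.

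For (ii), the hypothesis is stated for $M_G$, so I would first transfer it to $\Omega_G$: by Nikulin's theory the existence and uniqueness of a primitive embedding of an even lattice into an even unimodular lattice depend only on the rank and the discriminant form, and one has $\mathrm{rank}\,M_G=\mathrm{rank}\,\Omega_G=r_G$ while the discriminant forms of $M_G$ and $\Omega_G$ are related by the analysis in step (i); for the values of $r_G$ that actually occur this makes ``unique primitive embedding of $M_G$'' imply ``unique primitive embedding of $\Omega_G$''. Finally I would assemble $\phi$: composing a fixed $G$-equivariant isometry $\psi\colon\Omega_G\xrightarrow{\sim}\Omega_G'$ supplied by (i) with the two given primitive embeddings $\Omega_G\hookrightarrow H^2(X,\Z)$ and $\Omega_G'\hookrightarrow H^2(X',\Z)$ yields two primitive embeddings of the abstract lattice $\Omega_G$ into $\Lambda_{K3}$, which by uniqueness are conjugate by an isometry $\phi\colon H^2(X,\Z)\ra H^2(X',\Z)$. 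Then $\phi$ restricts to $\psi$ on $\Omega_G$ and maps $\Omega_G^\perp$ onto $(\Omega_G')^\perp$; since $\psi$ intertwines the $G$-actions, $G$ acts trivially on $\Omega_G^\perp$ and $(\Omega_G')^\perp$, and an endomorphism of a torsion-free lattice is determined by its restriction to any finite-index sublattice, $\phi$ conjugates $i(g)^*$ to $i'(g)^*$ for every $g\in G$, as required.
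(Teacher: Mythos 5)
The paper does not prove this theorem: it is quoted from Nikulin and Whitcher, so I am comparing your sketch against the arguments in those sources rather than against anything in the text. Your reduction is sound as far as it goes: if you had (i) the $G$-lattice $\Omega_G$ \emph{with its $G$-action} independent of $X$, and (ii) a unique primitive embedding of $\Omega_G$ in $\Lambda_{K3}$, then your final assembly of $\phi$ (extend a $G$-equivariant $\psi\colon\Omega_G\to\Omega_G'$ via uniqueness of the embedding, note $G$ acts trivially on the orthogonal complements, and conclude on the finite-index sublattice $\Omega_G\oplus\Omega_G^{\perp}$, hence everywhere by torsion-freeness) is correct. The Lefschetz step determining the rational representation is also fine.

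The genuine gap is exactly at the point you flag as ``the main obstacle'': step (i) is asserted, not proved, and the mechanism you propose cannot deliver it. Relating $H^2(X,\Z)^G$ to $M_G^{\perp}\subset H^2(Y,\Z)$ via $\pi_*$ and $\pi^*$ pins down the invariant lattice and hence at best the \emph{genus} (or isometry class) of $\Omega_G$; but the theorem is an equivariant statement, and the isometry class of $\Omega_G$ does not determine the conjugacy class of the $G$-action on it --- a priori a given negative definite lattice can carry non-conjugate faithful $G$-actions with trivial invariants. So your argument restates the problem at its hardest point rather than solving it. The actual proofs run in the opposite direction: the unique primitive embedding of $M_G$ into $H^2(Y,\Z)\cong\Lambda_{K3}$ lets one identify $H^2(Y,\Z)$ and $H^2(Y',\Z)$ compatibly with the exceptional configurations, and one then \emph{reconstructs} the $G$-cover $X\to X/G$ (hence the $G$-module $H^2(X,\Z)$) from the quotient data, so that the identification of the quotients lifts to a $G$-equivariant isometry upstairs; the hypothesis on $M_G$ enters there, not through the discriminant form of $\Omega_G$. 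A secondary, smaller gap: your step (ii) transfers ``unique primitive embedding'' from $M_G$ to $\Omega_G$, but these lattices have different discriminant forms (already for $G=\Z/2\Z$: $(\Z/2\Z)^6$ versus $(\Z/2\Z)^8$), so this implication is not formal and would have to be checked case by case via Nikulin's embedding criteria.
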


\begin{definition}
Under the assumptions of the Theorem \ref{theorem: unique action} the lattice $(\Lambda_{K3}^G)^{\perp}$ is uniquely determined by $G$, up to isometry. It will be called $\Omega_G$.
\end{definition}

\begin{remark}\label{rem: rank omega rank m} By \cite[(8,12)]{Nikulin symplectic}, it follows that rank$(\Omega_G)=$rank$(M_G)$.\end{remark}

\begin{theorem}\label{theorem: X admits G iff omegaG in NS(X)}{\rm (\cite[Theorem 4.15]{Nikulin symplectic}) } Let $G$ be a finite group acting symplectically on a K3 surface, such that $G$ has a unique action on $\Lambda_{K3}$. A K3 surface $X$ admits $G$ as group of symplectic automorphisms if and only if the lattice $\Omega_G$ is primitively embedded in $NS(X)$.\end{theorem}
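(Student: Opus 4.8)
I would prove the two implications separately; the forward implication is straightforward, and the converse carries the real content.

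Assume first that $i\colon G\hookrightarrow\mathrm{Aut}(X)$ realizes $G$ as a group of symplectic automorphisms. Each $i(g)^{*}$ fixes $\omega_{X}$, hence is the identity on the transcendental lattice $T_{X}$, so $T_{X}\subseteq H^{2}(X,\Z)^{G}$; passing to orthogonal complements in $H^{2}(X,\Z)$ yields $\bigl(H^{2}(X,\Z)^{G}\bigr)^{\perp}\subseteq T_{X}^{\perp}=NS(X)$. Since $G$ has a unique action on $\Lambda_{K3}$, the $G$-lattice $H^{2}(X,\Z)$ is equivariantly isometric to $\Lambda_{K3}$ with its standard $G$-action, so $\bigl(H^{2}(X,\Z)^{G}\bigr)^{\perp}\cong(\Lambda_{K3}^{G})^{\perp}=\Omega_{G}$. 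An orthogonal complement is a primitive sublattice and $NS(X)$ is itself primitive in $H^{2}(X,\Z)$, so this copy of $\Omega_{G}$ is primitively embedded in $NS(X)$.

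For the converse I would begin with a primitive embedding $\Omega_{G}\hookrightarrow NS(X)$ and compose it with the primitive inclusion $NS(X)\subseteq H^{2}(X,\Z)\cong\Lambda_{K3}$, obtaining a primitive embedding $\Omega_{G}\hookrightarrow\Lambda_{K3}$. The lattice-theoretic input behind Theorem~\ref{theorem: unique action} --- uniqueness of the primitive embedding of $\Omega_{G}$ (equivalently of $M_{G}$, cf.\ Remark~\ref{rem: rank omega rank m}) in $\Lambda_{K3}$ --- then gives an isometry $\phi\colon H^{2}(X,\Z)\to\Lambda_{K3}$ carrying this copy of $\Omega_{G}$ to the standard one. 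Transporting the standard $G$-action on $\Lambda_{K3}$ through $\phi$ produces an action $\rho$ of $G$ on $H^{2}(X,\Z)$ by isometries which preserves $\Omega_{G}$, restricts on it to the standard action, and is the identity on $\Omega_{G}^{\perp}$. Since $\Omega_{G}\subseteq NS(X)$ we get $T_{X}\subseteq\Omega_{G}^{\perp}$, so each $\rho(g)$ fixes $\omega_{X}$ and is therefore a Hodge isometry; moreover $\rho$ is faithful, the standard $G$-action on $\Omega_{G}$ being so (note $\Omega_{G}^{G}=0$, and $\Omega_{G}$ is negative definite, being orthogonal in $NS$ to a $G$-invariant ample class of some realization).

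It remains to replace $\rho$ by an action by automorphisms, and this is the step I expect to be delicate. Each $\rho(g)$ preserves the positive cone of $X$, as it fixes $\mathrm{Re}\,\omega_{X}$, which lies in that cone; hence $\rho(g)$ sends the ample (K\"ahler) chamber to another chamber of the Weyl group $W=W\bigl(NS(X)\bigr)$ generated by reflections in $(-2)$-classes. Since $W$ acts simply transitively on these chambers there is a unique $w_{g}\in W$ with $w_{g}\rho(g)$ preserving the ample chamber, and using that $W$ is normal among the Hodge isometries one verifies that $g\mapsto\widetilde{\rho}(g):=w_{g}\rho(g)$ is again a homomorphism. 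Each $\widetilde{\rho}(g)$ is an effective Hodge isometry which still fixes $\omega_{X}$ (the reflections generating $W$ act trivially on $T_{X}$), so by the Torelli theorem it is induced by a symplectic automorphism $\widetilde{g}$ of $X$. Finally one must check that $g\mapsto\widetilde{g}$ stays injective, i.e.\ that $\rho(G)\cap W=\{e\}$: a nontrivial element of this intersection would act on $\Omega_{G}$ through the Weyl group of the root sublattice of $\Omega_{G}$ while being the identity on $\Omega_{G}^{\perp}$, which one excludes from the explicit form of the standard $G$-action on $\Omega_{G}$ together with $\Omega_{G}^{G}=0$. The hard part is precisely this last point --- the effectivity correction together with the guarantee that it does not collapse $G$ ---; this is the content of \cite[Theorem~4.15]{Nikulin symplectic}, and it relies on the fine structure of $\Omega_{G}$ rather than on formal arguments.
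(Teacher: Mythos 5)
First, a point of comparison: the paper does not prove this theorem at all --- it is quoted from \cite[Theorem 4.15]{Nikulin symplectic} --- so there is no internal argument to measure yours against. Your forward implication is complete and correct. Your converse follows the expected (Nikulin) strategy, but it has a genuine gap exactly where you yourself flag ``the hard part'': you never identify the mechanism that makes the Weyl-group correction work and keeps the homomorphism faithful. The missing ingredient is the \emph{rootlessness} of $\Omega_G$, i.e.\ that it contains no vectors of square $-2$. Granting this, the fixed lattice $\Omega_G^{\perp}$ has signature $(3,19-\mathrm{rank}\,\Omega_G)$, so it contains a class $\kappa\in NS(X)\otimes\R\subset H^{1,1}(X)$ with $\kappa^2>0$; a generic such $\kappa$ is orthogonal to no $(-2)$-class of $NS(X)$, since the $(-2)$-classes orthogonal to all of $\Omega_G^{\perp}$ would lie in $\Omega_G$. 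Hence $\kappa$ lies in the interior of a single chamber $C$, every $\rho(g)$ fixes $\kappa$ and therefore $C$, and conjugating the whole of $\rho(G)$ by the unique $w\in W$ carrying $C$ to the K\"ahler chamber produces an effective Hodge action that is automatically a faithful homomorphism: no element-by-element correction $w_g$ is needed, and injectivity is free because a nontrivial element of $W$ fixes no chamber, so $\rho(G)\cap W=\{1\}$. Your proposed injectivity argument ``from the explicit form of the standard action on $\Omega_G$'' is not a proof and, without rootlessness, the step would genuinely fail. This is precisely the mechanism the paper itself uses later (proof that $L\simeq\Omega_{\Z/5\Z}$, citing \cite[Theorem 4.3]{Nikulin symplectic} and \cite[Step 4, proof of Proposition 5.2]{symplectic prime}), where ``no vectors of length $-2$'' is singled out as the decisive hypothesis.

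A second, smaller, imprecision: you substitute for the stated hypothesis (``$G$ has a unique action on $\Lambda_{K3}$'') the assertion that $\Omega_G$ has a unique primitive embedding in $\Lambda_{K3}$, ``equivalently'' that $M_G$ does. These are not interchangeable as stated: the unique-action hypothesis compares actions arising from geometric realizations, not arbitrary primitive embeddings of the abstract lattice $\Omega_G$, and the equivalence between unique embeddability of $M_G$ and of $\Omega_G$ is established nowhere in the paper. The standard way to avoid needing uniqueness of the embedding is to note that the reference action of $G$ on $\Omega_G$ is trivial on the discriminant group of $\Omega_G$ (it extends to the unimodular $\Lambda_{K3}$ acting trivially on $\Omega_G^{\perp}$), and therefore extends across \emph{any} primitive embedding $\Omega_G\hookrightarrow NS(X)\subset H^2(X,\Z)$ by the identity on the orthogonal complement; this is the extension you should feed into the effectivity argument above.
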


Nikulin proved that for each abelian group acting symplectically on a K3 surface, the hypothesis of the Theorem \ref{theorem: unique action} (and hence the ones of the Theorem \ref{theorem: X admits G iff omegaG in NS(X)}) are satisfied.

\begin{proposition}\label{prop: G as symp group iff H as symp group} Let $G$ be a finite group acting symplectically on a K3 surface and let $H$ be a subgroup of $G$. Let us assume that both $G$ and $H$ are neither $Q_8$ or $T_{24}$. We assume that both $M_H$ and $M_G$ admit a unique primitive embedding in $\Lambda_{K3}$ and rank$(M_G)=$rank$ (M_H)$. Then $\Omega_H\simeq \Omega_G$ and so a K3 surface $X$ admits $G$ as group of symplectic automorphisms if and only if $X$ admits $H$ as group of symplectic automorphisms.
\end{proposition}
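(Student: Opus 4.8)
The plan is to realize $\Omega_H$ and $\Omega_G$ simultaneously as primitive sublattices of one copy of $\Lambda_{K3}$, to observe that the hypothesis $H\le G$ forces $\Omega_H\subseteq\Omega_G$, and then to use the equality of ranks to upgrade this inclusion to an equality; the statement about K3 surfaces then follows immediately from Theorem \ref{theorem: X admits G iff omegaG in NS(X)}.

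First I would fix a K3 surface $X$ on which $G$ acts symplectically; such an $X$ exists by hypothesis, and since $H\le G$ the same group-theoretic restriction gives that $H$ acts symplectically on $X$ as well. Because $M_G$ and $M_H$ each admit a unique primitive embedding in $\Lambda_{K3}$, Theorem \ref{theorem: unique action} tells us that $G$ and $H$ both have a unique action on $\Lambda_{K3}$; hence $\Omega_G$ and $\Omega_H$ are well defined up to isometry, and in particular they may be taken to be $(H^2(X,\Z)^G)^{\perp}$ and $(H^2(X,\Z)^H)^{\perp}$ inside $H^2(X,\Z)\cong\Lambda_{K3}$ for this one surface $X$.

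The key step is the inclusion $\Omega_H\subseteq\Omega_G$. Indeed, since $H\le G$, every class fixed by all of $G$ is fixed by all of $H$, so $H^2(X,\Z)^G\subseteq H^2(X,\Z)^H$; taking orthogonal complements in $H^2(X,\Z)$ reverses the inclusion and yields $\Omega_H\subseteq\Omega_G$. Moreover, because $\Omega_H$ is primitive in $H^2(X,\Z)$, the quotient $H^2(X,\Z)/\Omega_H$ is torsion free, hence so is its subgroup $\Omega_G/\Omega_H$; thus $\Omega_H$ is in fact a primitive sublattice of $\Omega_G$.

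Now $\operatorname{rank}\Omega_G=\operatorname{rank}M_G=\operatorname{rank}M_H=\operatorname{rank}\Omega_H$ by Remark \ref{rem: rank omega rank m} together with the rank hypothesis, so the finitely generated abelian group $\Omega_G/\Omega_H$ is simultaneously torsion (equal ranks) and torsion free (primitivity), hence zero. Therefore $\Omega_H=\Omega_G$, and in particular $\Omega_H\simeq\Omega_G$. Finally, applying Theorem \ref{theorem: X admits G iff omegaG in NS(X)} to $G$ and to $H$ — legitimate since both have a unique action — a K3 surface $X'$ admits $G$ as group of symplectic automorphisms iff $\Omega_G$ embeds primitively in $NS(X')$, iff $\Omega_H$ does (the two lattices being isometric), iff $X'$ admits $H$ as group of symplectic automorphisms. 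I do not expect a genuine obstacle here: once the uniqueness of the actions is invoked to place $\Omega_G$ and $\Omega_H$ inside a common $\Lambda_{K3}$, the argument is elementary lattice theory; the only point that deserves a careful sentence is exactly that compatibility, which is what Theorem \ref{theorem: unique action} is there to guarantee.
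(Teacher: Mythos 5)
Your proof is correct and follows essentially the same route as the paper: realize both lattices inside one copy of $H^2(X,\Z)$ for a surface $X$ carrying $G$, obtain the inclusion $\Omega_H\subseteq\Omega_G$ of lattices of equal rank, use primitivity to force the index to be $1$, and then conclude via Theorem \ref{theorem: X admits G iff omegaG in NS(X)}. The only (harmless) difference is that you derive torsion-freeness of $\Omega_G/\Omega_H$ from the automatic primitivity of an orthogonal complement in $H^2(X,\Z)$, whereas the paper instead quotes the primitive embedding of $\Omega_H$ in $NS(X)$; both justifications are valid.
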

\proof Since $H$ is a subgroup of $G$, $\Omega_H$ is a sublattice of $\Omega_G$. Moreover rank$(\Omega_G)=$rank$(\Omega_H)$, by the Remark  \ref{rem: rank omega rank m} and the condition on the rank of the lattices $M_G$ and $M_H$. This implies that $\Omega_H\hookrightarrow \Omega_G$ with a finite index. Let $X$ be a K3 admitting $G$ as group of symplectic automorphisms. Then both $\Omega_G$ and $\Omega_H(\hookrightarrow \Omega_G)$ are primitively embedded in $NS(X)$, hence the index of the inclusion $\Omega_H\hookrightarrow \Omega_G$ is 1, i.e. $\Omega_G\simeq \Omega_H$.\\
The K3 surface $X$ admits $G$ as group of symplectic automorphisms if and only if $\Omega_G$ is primitively embedded in $NS(X)$. By the isometry $\Omega_H\simeq\Omega_G$ this condition is equivalent to require that $\Omega_H$ is primitively embedded in $NS(X)$, which holds  if and only if $X$ admits $H$ as group of symplectic automorphisms.\endproof

\begin{corollary}\label{cor: Z/5 iff Dh5} A K3 surface admits $\Z/5\Z$ as group of symplectic automorphisms if and only if it admits $\Dh_5$ as group of symplectic automorphisms.\end{corollary}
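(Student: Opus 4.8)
The strategy is to apply Proposition \ref{prop: G as symp group iff H as symp group} with $G=\Dh_5$ and $H=\Z/5\Z$. That $\Dh_5$ occurs as a group of symplectic automorphisms on some K3 surface is part of the classifications in \cite{mukai} and \cite{Xiao}; that $\Z/5\Z$ is a subgroup of $\Dh_5$ is evident; and neither group is $Q_8$ or $T_{24}$, since they have orders $10$ and $5$. So the only hypotheses of the Proposition that require work are the equality of ranks $\mathrm{rank}(M_{\Dh_5})=\mathrm{rank}(M_{\Z/5\Z})$ and the uniqueness of the primitive embeddings of $M_{\Dh_5}$ and of $M_{\Z/5\Z}$ in $\Lambda_{K3}$.

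For the ranks I would count the singularities of the two quotients. A symplectic automorphism of order $5$ has exactly four fixed points, each of which becomes an $A_4$ singularity on $X/(\Z/5\Z)$, so $M_{\Z/5\Z}$ has rank $4\cdot 4=16$. For $\Dh_5$ one first notes that no point of $X$ can be fixed by all of $\Dh_5$ (the induced action on the tangent plane would embed $\Dh_5$ into $\mathrm{SL}_2(\mathbb{C})$, impossible because $\Dh_5$ has several involutions), so every point stabilizer is cyclic of order $1$, $2$ or $5$; tracking the $\Dh_5$-orbits of the four fixed points of the order-$5$ subgroup and of the forty fixed points of the five involutions — and reading the resulting configuration off Xiao's Table 2 in \cite{Xiao} — one gets $2$ singularities of type $A_4$ and $8$ of type $A_1$, hence $\mathrm{rank}(M_{\Dh_5})=2\cdot 4+8\cdot 1=16$ as well (consistently, $e(X/\Dh_5)=8$ and resolving these ten singularities restores $e=24$). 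By Remark \ref{rem: rank omega rank m} the ranks of $\Omega_{\Dh_5}$ and $\Omega_{\Z/5\Z}$ are then both $16$.

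The uniqueness of the primitive embedding of $M_{\Z/5\Z}$ in $\Lambda_{K3}$ is already part of Nikulin's analysis of the abelian case in \cite{Nikulin symplectic} — it is exactly what makes $\Omega_{\Z/5\Z}$ well-defined. The genuinely new input is the same statement for $M_{\Dh_5}$. Here I would realize $M_{\Dh_5}$ as the primitive closure in $NS(Y)$ of the root lattice $A_4^{\oplus 2}\oplus A_1^{\oplus 8}$ spanned by the exceptional curves (equivalently, read $M_{\Dh_5}$ and its discriminant form off \cite[Table 2]{Xiao}) and then apply Nikulin's criterion for uniqueness of primitive embeddings into an even unimodular lattice: since $M_{\Dh_5}$ is negative definite of rank $16$ and $\Lambda_{K3}$ has signature $(3,19)$, it suffices that the discriminant group of $M_{\Dh_5}$ be generated by at most $3+19-16-2=4$ elements. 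Performing this overlattice computation and verifying the length bound (the bound being attained rather tightly) is the step I expect to be the main obstacle; if the crude form of the criterion were to fail one would fall back on its refinement in terms of the genus of the orthogonal complement.

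Granting both uniqueness statements, Proposition \ref{prop: G as symp group iff H as symp group} applies verbatim and yields both $\Omega_{\Z/5\Z}\simeq\Omega_{\Dh_5}$ and the asserted equivalence: a K3 surface admits $\Z/5\Z$ as group of symplectic automorphisms if and only if it admits $\Dh_5$ as group of symplectic automorphisms.
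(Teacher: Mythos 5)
Your overall strategy coincides with the paper's: apply Proposition \ref{prop: G as symp group iff H as symp group} to the pair $(\Dh_5,\Z/5\Z)$ after checking the rank equality and the uniqueness of the primitive embeddings of $M_{\Z/5\Z}$ and $M_{\Dh_5}$ in $\Lambda_{K3}$. Your rank count (four $A_4$ points for $\Z/5\Z$; two $A_4$ and eight $A_1$ points for $\Dh_5$, hence rank $16$ in both cases) is correct and agrees with what the paper reads off from \cite[Table 2]{Xiao}.

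The gap is in the step you yourself flag as the crux. The crude form of Nikulin's criterion that you propose to verify actually fails: $M_{\Dh_5}$ is an index-two overlattice of $A_4(-1)^{\oplus 2}\oplus A_1(-1)^{\oplus 8}$ whose glue vector lies entirely in the $A_1(-1)^{\oplus 8}$ part, so $M_{\Dh_5}\simeq A_4(-1)^{\oplus 2}\oplus M_{\Z/2\Z}$ with $M_{\Z/2\Z}$ the Nikulin lattice, and its discriminant group is $(\Z/5\Z)^2\oplus(\Z/2\Z)^6$. The minimal number of generators of this group is $6$ (forced by the $2$-part), which exceeds your bound $3+19-16-2=4$; the bound is not ``attained rather tightly'', it is violated. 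The argument therefore has to go through the refined, prime-by-prime hypotheses of \cite[Theorem 1.14.4]{Nikulin bilinear}: at $p=5$ the length is $2\le 22-16-2$, while at $p=2$ one uses that the $2$-part of the discriminant form of $M_{\Dh_5}$ equals that of $U(2)^{\oplus 3}$ (Remark \ref{rem: Nikulin lattice}, via \cite{morrison}), which splits off the discriminant form of $U(2)$ and so satisfies the alternative condition of the theorem even though the $2$-length equals $22-16=6$. You mention falling back on such a refinement but do not carry it out, and without identifying the $2$-part of the discriminant form the uniqueness of the embedding of $M_{\Dh_5}$ --- and hence the corollary --- is not established.
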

\proof The lattice $M_{\Z/5\Z}$ is computed in \cite{Nikulin symplectic}, where it is proved that it admits a unique primitive embedding in $\Lambda_{K3}$ and that its rank is $16$. The lattice $M_{\Dh_5}$ is described in \cite{Xiao} as an overlattice of index 2 of the lattice $A_4(-1)^{\oplus 2}\oplus A_1(-1)^{\oplus 8}$. In particular rank$(M_{\Dh_5})=16$ and $M_{\Dh_5}\simeq A_4(-1)^{\oplus 2}\oplus M_{\Z/2\Z}$, where $M_{\Z/2\Z}$ is the Nikulin lattice (see Remark \ref{rem: Nikulin lattice}). Thus the discriminant group of $M_{\Dh_5}$ is $(\Z/5\Z)^2\oplus (\Z/2\Z)^6$. By \cite[Theorem 1.14.4]{Nikulin bilinear}, $M_{\Dh_5}$ admits a unique primitive embedding in $\Lambda_{K3}$. The corollary immediately follows from the Proposition \ref{prop: G as symp group iff H as symp group}.\endproof

\begin{remark}{\rm We proved that if a K3 surface admits a symplectic automorphism of order five $\sigma_5$, hence it admits also a symplectic involution generating $\Dh_5$ together with $\sigma_5$. This result can not be improved, i.e.\ it is {\bf not} true that if a K3 surface $X$ admits $\Dh_5=\langle \sigma_5,\iota\rangle$ as group of symplectic automorphisms, then it admits also a symplectic automorphism $\alpha$ such that $J:=\langle \alpha, \sigma_5, \iota\rangle\supsetneq \Dh_5$ is a finite group. By contradiction assuming there exists such an $\alpha$, then $\Dh_5\subsetneq J$ and $\Omega_{\Dh_5}\simeq \Omega_J$. In particular rank$\Omega_J=$rank$\Omega_{\Dh_5}=16$, but there are no finite groups $J$ of symplectic automorphisms on a K3 surface such that $\Dh_5\subsetneq J$ and rank$M_J$(=rank$\Omega_{J})=16$ (cf. \cite[Table 2]{Xiao}). }\end{remark} 

\section{Construction of $\Omega_{\Z/5\Z}\simeq
\Omega_{\Dh_5}$}\label{section: construction of omega5}

The aim of this section is to construct the lattice
$\Omega_{\Z/5\Z}$ as overlattice of $A_4(-2)^{\oplus 4}$ and to describe the action of $\Dh_5$ on this lattice.
The automorphism of order five on $\Omega_{\Z/5\Z}$
will be induced by the automorphism of order five on each copy of
$A_4(-2)$.

We will add some rational linear combinations of the
elements of $A_4(-2)^{\oplus 4}$ to obtain an even overlattice of
$A_4(-2)^{\oplus 4}$. The main point is that we would like to
extend the automorphism of $A_4(-2)^{\oplus 4}$ to the lattice
$\Omega_{\Z/5\Z}$, so if we add an element to $A_4(-2)^{\oplus 4}$
we have to add all elements in its orbit.

We recall that the standard basis of $A_4$ is expressed in terms
of the standard basis $\{\varepsilon_i\}$ of $\mathbb{R}^5$ in the
following way: $\alpha_i=\varepsilon_i-\varepsilon_{i+1}$, hence
$\alpha_5=-\alpha_1-\alpha_2-\alpha_3-\alpha_4$. The cyclic
permutation of the basis vectors of $\mathbb{R}^5$ induces the
automorphism $\gamma$ on $A_4$ ($\gamma(\alpha_i)=\alpha_{i+1}$).

\begin{proposition}\label{prop: definition of L and not -2 vectors} Let us consider the lattice $A_4(-2)^{\oplus 4}$ and the
automorphism $g:=(\gamma,\gamma,\gamma,\gamma)$ (acting as
$\gamma$ on each copy of $A_4(-2)$). Let $a_{j,i}$, $j,i=1,2,3,4$
be the element $\alpha_i$ in the $j$-copy of $A_4(-2)$. Let
\begin{equation*}\mu:= \frac{1}{2}(a_{1,1}+a_{2,1}+a_{3,1}+a_{4,1}),\ \ \
\
\nu:=\frac{1}{2}(a_{2,1}+a_{3,3}+a_{3,4}+a_{4,1}+a_{4,3}+a_{4,4}).\end{equation*}
Then: \begin{itemize} \item The lattice $$L:=A_4(-2)^{\oplus
4}+\langle g^i(\mu), g^i(\nu)\rangle_{i=0,1,2,3},$$ generated by
$A_4(-2)^{\oplus 4}$ and the 8 vectors $g^i(\mu)$, $g^i(\nu)$, for
$i=0,1,2,3$, is an even overlattice of $A_4(-2)^{\oplus 4}$ of
rank 16. \item The index of $A_4(-2)^{\oplus 4}$ in $L$ is $2^8$.
\item There are no vectors of length $-2$ in $L$.\end{itemize}
\end{proposition}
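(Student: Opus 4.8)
The plan is to work with the explicit chain $A_4(-2)^{\oplus 4}\subseteq L\subseteq (A_4(-2)^{\oplus 4})^{\vee}$ and use discriminant-group bookkeeping together with a direct length computation. First I would set up notation: the discriminant group $(A_4(-2))^{\vee}/A_4(-2)$ has order $\det A_4(-2)=2^4\cdot\det A_4=2^4\cdot 5=80$, so the discriminant group of $A_4(-2)^{\oplus 4}$ has order $80^4$; an overlattice corresponds to an isotropic subgroup $H$ of the discriminant form, and $[L:A_4(-2)^{\oplus 4}]^2\cdot\det L=\det(A_4(-2)^{\oplus 4})$. To prove the second bullet ($[L:A_4(-2)^{\oplus 4}]=2^8$) I would show the images $\bar g^i(\mu),\bar g^i(\nu)$ ($i=0,1,2,3$) generate a subgroup of order $2^8$ inside the $2$-part of the discriminant group. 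Each of the eight generators has order $2$ (note $2\mu,2\nu\in A_4(-2)^{\oplus 4}$), so the subgroup is an $\mathbb{F}_2$-vector space; I would exhibit an $8\times 8$ matrix over $\mathbb{F}_2$ recording the coordinates of these eight vectors (with respect to a basis of the $2$-torsion of the discriminant group, e.g. the classes $\tfrac12 a_{j,i}$) and check it has rank $8$. This is a routine but slightly lengthy linear-algebra verification; I expect the main subtlety to be keeping the $A_4$-relation $\alpha_5=-\alpha_1-\alpha_2-\alpha_3-\alpha_4$ straight when applying $g$ and reducing mod $A_4(-2)^{\oplus 4}$.

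Next, for \emph{even overlattice}: since $A_4(-2)$ is even, the only thing to check is that $L$ is integral, which by bilinearity reduces to checking that $\mu,\nu$ and their $g$-translates pair integrally with one another and have even square. Here I would use that $A_4$ has intersection matrix with $-2$ on the diagonal, so $A_4(-2)$ has $4$ on the diagonal and even off-diagonal entries, making all values divisible by $2$ on $A_4(-2)$-vectors; for the half-integer vectors one computes, e.g., $\mu^2=\tfrac14(a_{1,1}^2+a_{2,1}^2+a_{3,1}^2+a_{4,1}^2)=\tfrac14(4\cdot 4)=4$ (the copies are orthogonal), and similarly $\nu^2$, $\mu\cdot\nu$, $g^i(\mu)\cdot g^j(\nu)$, etc.\ — a finite list of cases, each an explicit rational computed from the $A_4$ Gram matrix, all of which must come out to even integers. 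Once $L$ is integral it is automatically even because $L=A_4(-2)^{\oplus4}+\langle g^i(\mu),g^i(\nu)\rangle$ and evenness is detected on generators of an integral lattice of this type (more precisely, $x^2$ for $x\in L$ is determined mod $2\mathbb{Z}$ by the squares and pairings of the generators since $(x+y)^2=x^2+y^2+2x\cdot y$).

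The third bullet — no vectors of square $-2$ — is the real obstacle, and I would attack it as follows. Any $x\in L$ is of the form $x = v + \sum_{i}(\epsilon_i g^i(\mu)+\delta_i g^i(\nu))$ with $v\in A_4(-2)^{\oplus 4}$ and $\epsilon_i,\delta_i\in\{0,1\}$; by the even structure, $x^2\in -2+4\mathbb{Z}$ is impossible unless $x$ lies in the specific coset — in fact I'd first observe that for $x\in A_4(-2)^{\oplus 4}$ one has $x^2\in 4\mathbb{Z}$, so no $-2$-vectors there, and then analyze each of the $255$ nonzero cosets of $A_4(-2)^{\oplus4}$ in $L$. For a fixed coset represented by $c=\sum(\epsilon_i g^i(\mu)+\delta_i g^i(\nu))$, the minimum of $x^2$ over $x\in c+A_4(-2)^{\oplus4}$ is computed by projecting $c$ onto each $A_4(-2)$-summand and minimizing a quadratic form on a coset of $A_4(-2)$; since the projections have denominators $2$, the relevant cosets of $A_4(-2)$ are among the $16$ classes $\tfrac12(\text{sum of }\alpha\text{'s})+A_4$, and on $A_4(-2)=A_4$ scaled by $-2$ the minimal norm on each such nonzero coset is at least $-2\cdot\tfrac{6}{5}$ in absolute value (the covering-type estimate for $A_4$), so the only way to get total square $-2$ would be to have $c$ supported, after reduction, in very few summands. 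Rather than the general covering bound, I think the cleanest route is: reduce $c$ modulo $A_4(-2)^{\oplus4}$ to its "canonical" form, use the $g$-symmetry and the symmetry swapping copies to cut the $255$ cases down to a handful of orbit representatives, and for each representative compute the exact coset minimum from the $A_4$ Gram matrix, checking it is $\le -4$ in every case. I expect this case reduction (exploiting $\langle g\rangle$ and any extra symmetries of the configuration $\{\mu,\nu\}$) to be where care is needed; the individual minimum computations are mechanical.
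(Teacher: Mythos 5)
Your treatment of the first two bullets is essentially the paper's own argument (integrality and evenness checked on the finitely many pairings of $\mu,\nu$ and their $g$-translates; the index computed by showing the eight classes are independent in the $2$-torsion of the discriminant group), and that part is fine apart from a sign slip ($\mu^2=\tfrac14\cdot 4\cdot(-4)=-4$, not $4$, since $\alpha_i^2=-4$ in $A_4(-2)$).

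The gap is in the third bullet. The ``covering-type estimate'' you invoke is used in the wrong direction: the covering radius of $A_4$ gives an \emph{upper} bound on the minimum of a coset, not a lower bound, and your claimed per-component lower bound of $2\cdot\tfrac{6}{5}$ in absolute value is simply false. Indeed the coset $\tfrac12 a_{1,1}+A_4(-2)$ occurs as the first component of $\mu$ itself and contains the vector $\tfrac12 a_{1,1}$ of square $-1$. So the only correct per-component bound is $-1$ for a nontrivial coset of $A_4(-2)$ in $\tfrac12 A_4(-2)$, and with that bound a class supported on only one or two of the four summands could a priori produce a vector of square $-2$. Your fallback --- exact minima for all $255$ cosets up to symmetry --- would work but is left entirely unexecuted, and as described it still lacks the one structural fact that makes the computation unnecessary. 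That fact, which is the heart of the paper's proof, is a support (minimum-weight) statement: writing $w=y+z$ with $y=\sum b_ig^i(\mu)+\sum c_ig^i(\nu)$, $b_i,c_i\in\{0,1\}$, and computing the four components of $y$ explicitly, one checks that if two of them vanish (mod $A_4(-2)$) then all $b_i,c_i$ vanish; hence every $w\in L\setminus A_4(-2)^{\oplus 4}$ has at least three components in nontrivial cosets, each contributing at most $-1$, so $w^2\le -3$, and evenness forces $w^2\le -4$. Elements of $A_4(-2)^{\oplus 4}$ itself have $w^2\le -4$ trivially. Without this ``weight at least $3$'' observation (or the completed exhaustive minimum computation it replaces), your argument does not close.
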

\begin{proof} Since $\mu$ has an integer intersection with the basis
$\{a_{i,j}\}$ of $A_4(-2)^{\oplus 4}$ and has self intersection
$-4$, we can add the element $\mu$ to the lattice $A_4(-2)^{\oplus
4}$ obtaining an even overlattice. All the elements $g^i(\mu)$ in
the orbit of $\mu$ have an integer intersection with this basis of
$A_4(-2)^{\oplus 4}$ and $g^i(\mu)g^i(\mu)\in \Z$ for all
$i,j=0,1,2,3$. Thus we can add the four vectors $\mu$,
$g(\mu)$, $g^2(\mu)$,
$g^3(\mu)$ to the lattice $A_4(-2)^{\oplus 4}$.\\ 
It is easy to show that the vectors $g^i(\nu)$, $i=0,1,2,3$ have
an integer intersection pairing with all the vectors in
$A_4(-2)^{\oplus 4}$, and that $g^i(\nu)g^j(\nu)\in \Z$, $(g^i(\nu))^2\in 2\Z$ (indeed these are properties of all
the vectors of type
$v_{i,j,k,h,l,m}=\frac{1}{2}(0,\varepsilon_i-\varepsilon_{i+1},
\varepsilon_{j}-\varepsilon_{j+2},
\varepsilon_k-\varepsilon_h+\varepsilon_l-\varepsilon_m)$,
$k<h<l<m$ and $\nu=v_{1,3,1,2,3,5}$).\\ Moreover
$g^i(\nu)g^j(\mu)\in \Z$, $i,j\in\{0,1,2,3\}$ (indeed this is a
property of the vectors of type $v_{i,j,k,h,l,m}$ such that
$\{i,i+1\}\cap\{j,j+2\}=\emptyset$ and
$\{k,h,l,m\}=\{i,i+1,j,j+2\}$).\\
Thus adding the vectors in the orbit of $\nu$ and of $\mu$ to
$A_4(-2)^{\oplus 4}$ we construct an even overlattice $L$ of
$A_4(-2)^{\oplus 4}$. By the computation of the discriminant of
this lattice, it follows that $A_4(-2)^{\oplus 4}$ has index $2^8$
in $L$ (indeed to construct $L$ we add to $A_4(-2)^{\oplus 4}$
exactly eight vectors of type $\frac{1}{2}v$, $v\in A_4(-2)^{\oplus 4}$ and they are independent over $\Z$).\\
Now we prove that there are no vectors with length $-2$ in $L$.
Let $y=\sum_{i=0}^3 b_ig^i(\mu)+\sum_{i=0}^3 c_ig^i(\nu)$, $b_i,
c_i\in \Z$. In $A_4(-2)^{\oplus 4}\otimes \Q$ we have:
$$\begin{array}{cl}y:=&\frac{1}{2}\left(\sum_{i=0}^3b_i\alpha_{i+1},\ \ \ \ \ \sum_{i=0}^3b_i\alpha_{i+1}+\sum_{i=0}^3c_i\alpha_{i+1},\right.\\
&\left.\sum_{i=0}^3b_i\alpha_{i+1}+(-c_1+c_3)\alpha_1+(-c_1-c_2+c_3)\alpha_2+(c_0-c_1-c_2)\alpha_3+(c_0-c_2)\alpha_4,\right.\\&\left.\sum_{i=0}^3b_i\alpha_{i+1}+(c_0-c_1+c_3)\alpha_1+(-c_2+c_3)\alpha_2+(c_0-c_1)\alpha_3+(c_0-c_2+c_3)\alpha_4\right).\end{array}$$
If we require that at least two components of $y$ are equal to
zero, we obtain that $b_i=c_i=0$ for all $i$. So if $y\neq 0$, then $y$ has at most one component equal to zero.\\
Each vector $w$ in $L$ is of the form $y+z$ with $y$ as above,
$b_i,c_i\in\{0,1\}$, and $z\in A_4(-2)^{\oplus 4}$, moreover such
$y$ and $z$ are uniquely determined by $w$.\\
If $y=(0,0,0,0)$, $w\in A_4(-2)^{\oplus 4}$ and hence $w^2\leq
-4$.\\ If $y\neq (0,0,0,0)$, then $w=\frac{1}{2}(w_1,w_2,w_3,w_4)$
with $w_i\in A_4(-2)$ and at most one of $w_i=0$. Since $w_i^2\leq
-4$ and $w_i\cdot w_j=0$ if $i\neq j$, we get $w^2\leq
\frac{3}{4}(-4)$. Hence there are no vectors of length $-2$ in
$L$. \end{proof}

\begin{proposition} The lattice $L$ is isometric to the lattice
$\Omega_{\Z/5\Z}=\Omega_{\Dh_5}$.\end{proposition}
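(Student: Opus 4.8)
The plan is to identify $L$ with $\Omega_{\Z/5\Z}$ by matching the two standard invariants that determine a lattice of this type up to isometry, namely rank, signature, and discriminant form, and then invoking Nikulin's uniqueness theorem for primitive embeddings into $\Lambda_{K3}$. By Proposition \ref{prop: definition of L and not -2 vectors} the lattice $L$ is an even lattice of rank $16$. Since $A_4(-2)^{\oplus 4}$ is negative definite, $L$ is negative definite as well, so its signature is $(0,16)$. The same holds for $\Omega_{\Z/5\Z}$: by Remark \ref{rem: rank omega rank m} it has rank $16$, and it is well known (and follows from the fact that the $\Z/5\Z$-action fixes the transcendental lattice and a positive-definite part of $NS$) that $\Omega_{\Z/5\Z}$ is negative definite. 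So the first step is just to record that $L$ and $\Omega_{\Z/5\Z}$ have the same rank and signature.

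The second and main step is to compute the discriminant form of $L$ and check it agrees with that of $\Omega_{\Z/5\Z}$. From Proposition \ref{prop: definition of L and not -2 vectors}, $|L/A_4(-2)^{\oplus 4}| = 2^8$, and $A_4(-2)$ has discriminant group $\Z/5\Z \times \Z/2\Z \times \Z/2\Z$ of order $20$ (it is $A_4$ rescaled by $2$, so the discriminant group of $A_4$, which is $\Z/5\Z$, gets an extra $(\Z/2\Z)^2$ factor from the rescaling, giving $|d(A_4(-2))| = 5 \cdot 2^4 = 80$ — here one must be careful: $\mathrm{disc}(A_4(-2)) = (-2)^4 \cdot \mathrm{disc}(A_4) = 16 \cdot 5 = 80$). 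Hence $|d(A_4(-2)^{\oplus 4})| = 80^4$ and $|d(L)| = 80^4 / (2^8)^2 = 80^4/2^{16} = 5^4$. On the other side, $\Omega_{\Z/5\Z}$ has discriminant group $(\Z/5\Z)^4$: this can be read off from $M_{\Z/5\Z}$ together with the standard relation between the discriminant forms of $\Omega_G$ and $M_G$ (or quoted directly from \cite{symplectic prime}). So $L$ and $\Omega_{\Z/5\Z}$ have isomorphic discriminant groups. One then checks that the discriminant \emph{form} of $L$ is $q = q_{5}$-type: since the $5$-part of $d(L)$ comes entirely from the $5$-parts of the four copies of $A_4(-2)$, which are untouched by the index-$2^8$ overlattice construction (all the glue vectors $g^i(\mu), g^i(\nu)$ are of the form $\frac12 v$ with $v \in A_4(-2)^{\oplus 4}$, hence $2$-torsion in the discriminant), the $5$-part of $q_L$ equals the $5$-part of $q_{A_4(-2)^{\oplus 4}}$, which is $\bigl(q_{A_4}\bigr)^{\oplus 4}$ up to the effect of the rescaling on the odd part (rescaling by $2$, which is a unit mod $5$, only twists $q_{A_4}$ by a square class), and this matches $q_{\Omega_{\Z/5\Z}}$ by \cite{symplectic prime}. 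The $2$-part of $d(L)$ is trivial since $|d(L)| = 5^4$ is odd, so there is nothing further to check there.

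The third step is the conclusion. By \cite[Theorem 1.14.4]{Nikulin bilinear} (Nikulin's criterion: $\mathrm{rk} + \ell(A_q) \le \mathrm{rk}(\Lambda_{K3}) = 22$, here $16 + 4 = 20 \le 22$, and the signature/discriminant-form compatibility with $\Lambda_{K3}$), an even lattice with signature $(0,16)$ and discriminant form $q_{\Omega_{\Z/5\Z}}$ admits a unique primitive embedding into $\Lambda_{K3}$, and its orthogonal complement is determined up to isometry. Since $\Omega_{\Z/5\Z}$ is by definition this orthogonal complement (equivalently: two even lattices with the same signature and discriminant form and satisfying Nikulin's inequalities $\ell(A_q) \le \mathrm{rk} - 2$ coincide — indeed $\mathrm{rk}(L) = 16 \geq 4 + 2 = \ell(A_{q_L}) + 2$), we get $L \simeq \Omega_{\Z/5\Z}$, and by Corollary \ref{cor: Z/5 iff Dh5} this is $\Omega_{\Dh_5}$.

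The main obstacle I anticipate is the precise bookkeeping in step two: getting the effect of the $(-2)$-rescaling on the discriminant form of $A_4$ exactly right (the $5$-part is twisted by the class of $2 \bmod 5$, which happens to be a non-square, so one should verify whether $q_{A_4(-2)}$ on its $5$-part equals $q_{A_4}$ or its non-square twist, and confirm that this matches whatever is written in \cite{symplectic prime} for $\Omega_{\Z/5\Z}$), and checking that the index-$2^8$ extension genuinely kills the entire $2$-part — i.e. that the eight glue vectors span a Lagrangian (maximal isotropic) subgroup of the $2$-part of $d(A_4(-2)^{\oplus 4})$, which has order $2^{16}$, so a maximal isotropic subgroup has order $2^8$, consistent with the index found. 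Verifying isotropy amounts to the congruences $g^i(\mu)^2, g^i(\nu)^2 \in 2\Z$ and $g^i(\mu)\cdot g^j(\nu), g^i(\mu)\cdot g^j(\mu), g^i(\nu)\cdot g^j(\nu) \in \Z$ already established in Proposition \ref{prop: definition of L and not -2 vectors}, so this part is essentially done; the residual check is just that these eight classes are independent, which was also noted there.
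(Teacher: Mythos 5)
Your strategy is genuinely different from the paper's: you try to pin down $L$ purely lattice--theoretically, by matching rank, signature and discriminant form with those of $\Omega_{\Z/5\Z}$ and then appealing to Nikulin's uniqueness results, whereas the paper constructs an explicit K3 surface $S$ carrying a symplectic $\Z/5\Z$-action with co-invariant lattice $L$ (embed $L$ primitively in $\Lambda_{K3}$, extend $g$ by the identity on $L^{\perp}$, use the absence of $(-2)$-vectors together with the Torelli theorem to realize $g$ by a symplectic automorphism) and then invokes the uniqueness of the action. Unfortunately your route has a genuine gap at its final step. The computations in your first two steps are fine and show that $L$ and $\Omega_{\Z/5\Z}$ are even, negative definite of rank $16$ with isomorphic discriminant forms, i.e.\ that they lie in the same genus. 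But the inference ``same signature and discriminant form, with $\ell(A_q)\le \mathrm{rk}-2$, hence isometric'' is Nikulin's Corollary 1.13.3, which holds only for \emph{indefinite} lattices (its hypotheses include $t_{+}\ge 1$ and $t_{-}\ge 1$); for definite lattices a genus may contain several isometry classes, and rank $16$ is exactly where this first becomes visible for even lattices: $E_8^{\oplus 2}$ and $D_{16}^{+}$ are in the same (unimodular) genus but are not isometric. So your argument proves only that $L$ and $\Omega_{\Z/5\Z}$ belong to the same genus.

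The fallback through primitive embeddings into $\Lambda_{K3}$ does not close the gap either. Theorem 1.14.4 of \cite{Nikulin bilinear} gives uniqueness of the primitive embedding of each of $L$ and $\Omega_{\Z/5\Z}$ \emph{separately} (here $4\le 22-16-2$), and their orthogonal complements are both isometric to the unique even lattice $K$ of signature $(3,3)$ and discriminant form $-q_L$ (Corollary 1.13.3 does apply to $K$, which is indefinite with $4\le 6-2$). But to conclude $L\simeq K^{\perp}\simeq\Omega_{\Z/5\Z}$ you would need the primitive embedding of $K$ into $\Lambda_{K3}$ to be unique, and the hypotheses of Theorem 1.14.4 fail for $K$ (one needs $l_{+}>t_{+}$, i.e.\ $3>3$). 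Some additional input is therefore required: either a proof that the genus of $L$ has class number one, or --- as the paper does --- a geometric realization of $L$ as $(H^{2}(S,\Z)^{G})^{\perp}$ for a K3 surface $S$ with a symplectic $\Z/5\Z$-action, after which the uniqueness of the action (Theorem \ref{theorem: unique action}) yields the isometry. Your observation that $L$ has no $(-2)$-vectors, which you do not use, is precisely the ingredient the paper needs to make the geometric construction work.
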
 \begin{proof}
By uniqueness of $\Omega_{\Z/5\Z}$, to prove the proposition it
suffices to show that there exists a K3 surface $S$ such that
$G=\Z/5\Z$ is a group of symplectic automorphisms on $S$ and
$(H^2(S,\Z)^G)^{\perp}\simeq L$.\\
By construction $L$ admits an automorphism of order 5, $g$, acting
trivially on the discriminant group. Moreover $L$ is negative
definite and its discriminant group is $(\Z/5\Z)^4$. Hence it
admits a primitive embedding in $\Lambda_{K3}$ (\cite[Theorem
1.14.4]{Nikulin bilinear}). Since  $g$ acts trivially on the
discriminat group, $G:=\langle g\rangle$ extends to a group of
isometries on $\Lambda_{K3}$ which acts as the identity on
$L^{\perp_{\Lambda_{K3}}}$.\\
Let $S$ be a K3 surface such that $L\subset NS(S)$ (such a K3
surface exists by the surjectivity of the period map). By the
Proposition \ref{prop: definition of L and not -2 vectors}, $L$
does not contain elements of length $-2$. This is enough to prove
that the isometries of $G$ defined above (if necessary composed with a reflection in the Weil group) are effective isometries
for $S$ (the proof of this fact is essentially given in
\cite[Theorem 4.3]{Nikulin symplectic}, see also \cite[Step 4,
proof of Proposition 5.2]{symplectic prime}). By construction,
these are Hodge isometries (cf. \cite[Theorem 4.3]{Nikulin
symplectic}), so they are induced by automorphisms on $S$ (by the
Torelli theorem, cf. \cite{Burns Rapoport torelli theorem
K3}). Since these automorphisms act as the identity on $T_S\subset
L^{\perp_{\Lambda_{K3}}}$, they are symplectic.
By construction of the isometries of $G$, $L\simeq
(H^2(S,\Z)^{G})^{\perp}$, and so $L\simeq \Omega_{\Z/5\Z}$. \end{proof}

Since $\Omega_{\Dh_5} \simeq L$, on $L$ acts the dihedral group and in particular an involution. This implies that the lattice $\Omega_{\Z/2\Z}\simeq E_8(-2)$ (cf. \cite{morrison}) is a primitively embedded in $L$ and there exists an involution on $L$ acting as $-1$ on this lattice and as the identity on its orthogonal. In the following remark we give an embedding of $E_8(-2)$ in $L$ and in the proof of the Corollary \ref{cor: omegadh5 is isomteric to DIH10(16)} we describe the involution associated to this embedding.
\begin{remark}\label{rem: E8 in L}{\rm The vectors
$$\begin{array}{llll}e_1:=\mu&e_2:=g^2(\mu)+g^3(\mu)&e_3:=\nu& e_4:=\mu+g^2(\mu)+g^3(\mu)-g^2(\nu)-g^3(\nu)\\
e_5:=a_{1,1}&e_6:=a_{1,3}+a_{1,4}&e_7:=a_{2,1}&e_8:=a_{2,3}+a_{2,4}\end{array}$$
generate of a copy of $E_8(-2)$ embedded in $L$. Indeed the
lattice generated by $e_i$ is such that multiplying its bilinear
form by $\frac{1}{2}$ one obtains a negative definite even
unimodular lattice of rank 8, i.e. a copy of $E_8(-1)$.
 }\end{remark}

\begin{remark}\label{rem: 2 E8 in L} Let $f_{i+8}=g(e_i)$, $i=1,\ldots, 8$ where $\{e_i\}_{i=1,\ldots,8}$ is the basis of $E_8(-2)$ defined in the Remark \ref{rem: E8 in L}. Since $g$ is an isometry of the lattice it is clear that $f_i$, $i=9,\ldots 16$ generate a copy of $E_8(-2)$ embedded in $L$. A direct computation shows that the classes $e_i$, $f_{i+8}$ $i=1,\ldots 8$ generates a lattice of rank 16 and discriminant $5^4$ embedded in $L$ and so they are a $\Z$-basis for $L$.\end{remark}

The paper \cite{EE8- lattices and dihedral} classifies positive definite lattices which have dihedral groups $\Dh_n$ (for $n=2,3,4,5,6$) in the group of the isometries and which have the properties:
\begin{itemize}\item the lattices are rootless (i.e. there are no elements of length $2$), \item they are the sum of two copies of $E_8(2)$, \item there are two involutions in $\Dh_n$ acting as minus the identity on each copy of $E_8(2)$.\end{itemize} In \cite[Section 7]{EE8- lattices and dihedral} it is proved that there is a unique lattice with all these properties and admitting $\Dh_5$ in the group of isometries,
called $DIH_{10}(16)$.
\begin{corollary}\label{cor: omegadh5 is isomteric to DIH10(16)} The lattice $DIH_{10}(16)$ described in \cite{EE8- lattices and dihedral} is isometric to the lattice $L(-1)\simeq\Omega_{\Dh_5}(-1)$.\end{corollary}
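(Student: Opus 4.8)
The plan is to verify that the lattice $L(-1)$ satisfies the three defining properties of $DIH_{10}(16)$ listed above, and then invoke the uniqueness statement of Griess--Lam to conclude. Since $DIH_{10}(16)$ is \emph{positive} definite and $L$ is negative definite (it sits inside $\Omega_{\Z/5\Z}$, which is negative definite), the correct comparison is between $DIH_{10}(16)$ and $L(-1)$. I would organize the verification as three checks.

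\emph{First, rootlessness.} By Proposition \ref{prop: definition of L and not -2 vectors} the lattice $L$ has no vectors of length $-2$; equivalently $L(-1)$ has no vectors of length $2$. This is exactly the ``rootless'' hypothesis.

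\emph{Second, the decomposition into two copies of $E_8(2)$.} Here I would use Remarks \ref{rem: E8 in L} and \ref{rem: 2 E8 in L}. Remark \ref{rem: E8 in L} exhibits a sublattice $E:=\langle e_1,\dots,e_8\rangle\cong E_8(-2)$ inside $L$, and Remark \ref{rem: 2 E8 in L} exhibits $g(E)=\langle f_9,\dots,f_{16}\rangle\cong E_8(-2)$, with the sixteen classes $e_1,\dots,e_8,f_9,\dots,f_{16}$ forming a $\Z$-basis of $L$. Passing to $L(-1)$, these become two copies of $E_8(2)$ whose sum (as a sublattice generated by both) is all of $L(-1)$; that is, $L(-1)$ is generated by two copies of $E_8(2)$, matching the second property.

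\emph{Third, the two involutions acting as $-1$ on each $E_8(2)$ summand.} Since $\Omega_{\Dh_5}\simeq L$, the dihedral group $\Dh_5$ acts on $L$; write $\Dh_5=\langle g,\iota\rangle$ with $g$ of order $5$ and $\iota$ an involution. The involution $\iota$ associated to the embedding $E\cong E_8(-2)\hookrightarrow L$ acts as $-1$ on $E$ (this is the content of the sentence preceding Remark \ref{rem: E8 in L}: there is an involution on $L$ acting as $-1$ on the embedded $E_8(-2)$ and as the identity on its orthogonal complement). Then $g\iota g^{-1}$ is a second involution in $\Dh_5$, and since $g$ conjugates $E$ to $g(E)$, the involution $g\iota g^{-1}$ acts as $-1$ on $g(E)\cong E_8(-2)$. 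Thus $L(-1)$ carries $\Dh_5$ in its isometry group together with two involutions acting as $-1$ on the respective $E_8(2)$ summands. Here I should double-check that $g(E)$ is indeed the ``second'' $E_8(2)$ intended in the Griess--Lam setup and that the pair of copies they use is $(E, g(E))$ rather than some other pair — but this is forced once we know the two copies generate the whole lattice and are swapped (up to the dihedral relations) by the order-$5$ element.

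Having checked all three properties, I would conclude: by \cite[Section 7]{EE8- lattices and dihedral} there is a \emph{unique} lattice with these properties admitting $\Dh_5$ in its isometry group, namely $DIH_{10}(16)$; hence $L(-1)\simeq DIH_{10}(16)$, and since $L\simeq\Omega_{\Dh_5}$ we also get $\Omega_{\Dh_5}(-1)\simeq DIH_{10}(16)$. The main obstacle I anticipate is not any of the three checks individually — rootlessness is already proved, and the $E_8(2)$-decomposition is bookkeeping with the given bases — but rather making sure the involution statement is airtight: one must be certain that the involution $\iota$ produced from the $E_8(-2)$-embedding is genuinely an element of the $\Dh_5$ acting on $L$ (so that its conjugate by $g$ is also in the group and behaves as claimed), and that the Griess--Lam uniqueness really applies verbatim to $L(-1)$ with this particular pair of $E_8(2)$'s. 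Verifying the explicit action of $\iota$ on the basis vectors $e_i$ (and hence on the complementary copy $g(E)$) is the computational heart of the argument and is where I would spend the most care.
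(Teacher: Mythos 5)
Your proposal follows the paper's proof essentially verbatim: the same three checks (rootlessness from Proposition \ref{prop: definition of L and not -2 vectors}, the two copies $E=\langle e_i\rangle$ and $g(E)=\langle f_i\rangle$ of $E_8(-2)$ generating $L$ from Remarks \ref{rem: E8 in L} and \ref{rem: 2 E8 in L}, and the involution acting as $-1$ on $E$ and as the identity on $E^{\perp}$), and your second involution $g\iota g^{-1}$ is literally the paper's $g^2\circ h$ by the dihedral relation. The one point you flag as needing care --- that $\langle g,h\rangle$ really is $\Dh_5$ --- is exactly what the paper settles by computing $h$ explicitly on the basis of $A_4(-2)^{\oplus 4}$.
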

\proof The even lattice $L$ has no vectors of length $-2$ (Proposition \ref{prop: definition of L and not -2 vectors}).\\ 
On $L$ there is an isometry of order 5, $g$ (Proposition \ref{prop: definition of L and not -2 vectors}).\\ 
Let us define a map $h$ on the lattice $L$ which acts as $-1$ on the copy of $E_8(-2)$ generated by $e_i$, $i=1,\ldots, 8$ (cf. Remark \ref{rem: E8 in L}) and as the idenitity on the orthogonal complement. Since $h$ acts trivially on the discriminant group of $E_8(-2)\simeq\langle e_i\rangle_{i=1,\ldots, 8}$, $h$ is an isometry of $L$ and in particular an involution. One can directly check that its action on the basis of $A_4(-2)^{\oplus 4}$ is $h(a_{i,1})=-a_{i,1}$, $h(a_{i,2})=-a_{i,5}$, $h(a_{i,3})=-a_{i,4}$, $h(a_{i,4})=-a_{i,3}$, $i=1,2,3,4$. This action extends to a $\Z$-basis of $L$.\\ The group $\langle g,h\rangle $ is $\Dh_5$.
The involutions $h$ and $g^2\circ h$ are two involutions generating the group $\Dh_5$. By construction $h$ and $g^2\circ h$ act as minus the identity respectively on the lattice $E_8(-2)\simeq\langle e_i\rangle_{i=1,\ldots 8}$ and on the lattice generated 
by $E_8(-2)\simeq\langle f_i\rangle_{i=9,\ldots 16}$. These two copies of $E_8(-2)$ generate $L$ (by Remark \ref{rem: 2 E8 in L}). So $L(-1)$ satisfies the conditions which define $DIH_{10}(16)$ and hence $DIH_{10}(16)\simeq L(-1)\simeq \Omega_{\Dh_5}(-1)$.

\section{Examples: algebraic K3 surfaces with a polarization of a low degree.}\label{section: examples}
Here we give some very explicit examples of families of K3 surfaces admitting $\Z/5\Z$, and hence $\Dh_5$, as group of symplectic automorphisms. In particular in this section we consider algebraic K3 surfaces.\\
We recall that a polarization $L$, with $L^2=2d$, on a K3 surface $X$ defines a map $\phi_L:X\ra \mathbb{P}^{d+1}$. In this section we consider K3 surfaces with a polarization $L$ such that $\phi_L(X)$ is a complete intersection in a certain projective space and K3 surfaces with a polarization of degree 2, which exhibits the K3 surfaces as double covers of the plane.\\ 

Let $X$ be a general member of a family of K3 surfaces admitting an automorphism of order 5, $\sigma_5$, and a polarization, $L$, invariant under $\sigma_5$. In \cite[Proposition 5.1]{symplectic prime} the possible N\'eron--Severi groups of $X$ are computed. In particular if $L^2=2d<10$, one obtains that $NS(X)\simeq \Z L\oplus \Omega_{\Z/5\Z}=:\mathcal{L}_{2d}$ . This lattice admits a unique primitive embedding in $\Lambda_{K3}$. The family of K3 surfaces with a polarization $L$, of degree $L^2< 10$, invariant under  a symplectic automorphism of order 5, is then the family of the $\mathcal{L}_{2d}$-polarized K3 surfaces. In particular for each $d<5$, we find a 3-dimensional family of K3 with such a polarization $L$ and hence we have the following possibilities:
\begin{itemize} \item $\phi_L:X\stackrel{2:1}{\ra} \mathbb{P}^2$, so $X$ is a double cover of $\mathbb{P}^2$ branched along a plane sextic curve: in this case $NS(X)\simeq \mathcal{L}_{2};$\item $\phi_L(X)$ is a quartic in $\mathbb{P}^3$: in this case $NS(X)\simeq\mathcal{L}_{4}$; \item $\phi_L(X)$
is the complete intersection of a quadric and a cubic in
$\mathbb{P}^4$: in this case $NS(X)\simeq\mathcal{L}_{6}$; \item $\phi_L(X)$ is the complete intersection of three quadrics
in $\mathbb{P}^5$: in this case $NS(X)\simeq\mathcal{L}_{8}$.
\end{itemize}
Now we construct a general member of each of these
families and show that it also admits a symplectic involution
$\iota$ generating, together with $\sigma_5$, the group $\Dh_5$.
Since the automorphisms $\iota$ and $\sigma_5$ leave invariant the
polarization, both these automorphisms can be extended to
automorphisms of the ambient projective space.\\
We will denote by $\omega$ a primitive $5$-th root of unity.
\subsection{$L^2=4$} This polarization gives a map to $\mathbb{P}^3$ where
the K3 surfaces are realized as quartic surfaces.
Let us consider the automorphism
$$\sigma_{\mathbb{P}^3}:(x_0:x_1:x_2:x_3)\ra (x_0:\omega^3 x_1:\omega x_2:\omega^2 x_3).$$
The quartic surfaces in
$\mathbb{P}^3$ defined as
\begin{equation}\label{formula: equation 5 in
P3}V(ax_0^3x_2+bx_0^2x_1^2+cx_0x_3^3+dx_0x_1x_2x_3+ex_1^3x_3+fx_1x_2^3+gx_2^2x_3^2),\end{equation}
are invariant for $\sigma_{\mathbb{P}^3}$. Hence the restriction
of $\sigma_{\mathbb{P}^3}$ to K3 surfaces with equations
\eqref{formula: equation 5 in P3} is an automorphism $\sigma_5$ of
the surfaces. To show that this automorphism is symplectic it suffices to
apply $\sigma_5$ to the following holomorphic two form in local coordinates
$x=x_1/x_0$, $y=x_2/x_0$, $z=x_3/x_0$:
$$\left(\frac{\partial f}{\partial z}\right)^{-1}dx\wedge dy,$$ where $f$
denotes the equation of the quartic in the local coordinate
$x,y,z$.\\
The equation \eqref{formula: equation 5 in P3} depends on 7
parameters. The automorphisms of $\mathbb{P}^3$ commuting with
$\sigma_{\mathbb{P}^3}$ are $diag(\alpha,\beta,\gamma,\delta)$
(which is a four dimensional group), hence this family of
$\sigma_{\mathbb{P}^3}$-invariant quartics
has $$(7-1)-(4-1)=3$$ moduli. 
So the family of K3 surfaces given by the equation \eqref{formula:
equation 5 in P3} is the family of K3 surfaces admitting an
automorphisms of order 5 leaving invariant a polarization of
degree 4.\\
Up to a projectivity, commuting with $\sigma_{\mathbb{P}^3}$, the equation \eqref{formula: equation 5 in P3} becomes
\begin{equation}\label{formula: second equation 5 in
P3}a'x_0^3x_2+b'x_0^2x_1^2+c'x_0x_3^3+d'x_0x_1x_2x_3+a'x_1^3x_3+c'x_1x_2^3+g'x_2^2x_3^2=0.\end{equation}
Let us define an involution of $\mathbb{P}^3$:
$$\iota_{\mathbb{P}^3}:(x_0:x_1:x_2:x_3)\ra (x_1:x_0:x_3:x_2).$$
The equation \eqref{formula: second equation 5 in P3} is invariant
under $\iota_{\mathbb{P}^3}$, hence $\iota_{\mathbb{P}^3}$ induces
an automorphism of the quartic surfaces with equation
\eqref{formula: second equation 5 in P3}, we call this
automorphism $\iota$. The fixed point set of
$\iota_{\mathbb{P}^3}$ in $\mathbb{P}^3$ is the union of the
lines:
$$l_1=\left\{\begin{array}{l} x_0=x_1\\ x_2=x_3\end{array}\right.,\ \ \ l_2=\left\{\begin{array}{l} x_0=-x_1\\ x_2=-x_3\end{array}\right.$$
Hence $\iota$ fixes eight points on the quartics \eqref{formula:
second equation 5 in P3}: the intersection of the quartics with
$l_1$ and $l_2$. This is enough to show that $\iota$ is a
symplectic involution, indeed the involutions which are not
symplectic either are fixed point free or fix some curves
\cite[Theorem 1]{Zhang}. Hence the quartics given in
\eqref{formula: second equation 5 in P3} (and hence, up to a
projectivity, in \eqref{formula: equation 5 in P3}) admit both the
automorphisms $\sigma_5$ and $\iota$. It is easy to check that
$\langle \sigma_{\mathbb{P}^3},\iota_{\mathbb{P}^3}\rangle
=\Dh_5$, and hence $\langle \sigma_5,\iota\rangle=\Dh_5$. So the
family of smooth quartic surfaces in $\mathbb{P}^3$ admitting the symplectic automorphism
$\sigma_5$ admits also a symplectic involution $\iota$ and in fact
the group $\Dh_5=\langle \sigma_5,\iota\rangle$.
\subsection{$L^2=6$}
This polarization gives a map to $\mathbb{P}^4$ where the K3
surfaces are realized as complete intersections of a cubic and a
quadric.\\ Let us consider the automorphism
$$\sigma_{\mathbb{P}^4}:(x_0:x_1:x_2:x_3:x_4)\ra (x_0:\omega x_1:\omega^2 x_2:\omega^3 x_3:\omega^4 x_4).$$
Let:
\begin{eqnarray}\label{formula: equation 5 in P4 quadric}\begin{array}{l} Q:=V(ax_0^2+bx_1x_4+cx_2x_3);\\
C:=V(dx_0^3+ex_0x_1x_4+fx_0x_2x_3+gx_1^2x_3+hx_2x_4^2+lx_1x_2^2+mx_3^2x_4),\end{array}\end{eqnarray}
then $Q$ and $C$ are $\sigma_{\mathbb{P}^4}$-invariant
hypersurfaces in $\mathbb{P}^4$. We observe that the complete
intersection of these two hypersurfaces is generically smooth
and thus it is a K3 surface.\\
The complete intersection of $Q$ and $C$ is also the
complete intersection of $Q$ and $C+ \lambda x_0Q$. Hence there is
a 1-dimensional family of invariant cubics giving the same
complete intersection: the cubics giving different complete
intersections depend on $7-1=6$ parameters. The automorphisms of
$\mathbb{P}^4$ which commute with $\sigma_{\mathbb{P}^4}$ are of
the form $diag(\alpha, \beta, \gamma, \delta, \varepsilon)$. So
the family of complete intersections of a cubic and a quadric 
invariant under the automorphism $\sigma_{\mathbb{P}^4}$ has
$(3-1)+(6-1)-(5-1)=3$
moduli.\\
Let $X$ be the complete intersection of $Q$ and $C$. 
The automorphism $\sigma_{\mathbb{P}^4}$ induces a symplectic automorphism
on $X$ (this can be shown as in case $L^2=4$
considering the two holomorphic form, in local coordinates
$x,y,z,t$, $(dx\wedge dy)/(Q_zC_t-C_tQ_z)$ where $F_x$ is the
partial derivative of $F$
w.r.t.\ $x$).\\
Up to the action of the projectivities commuting with
$\sigma_{\mathbb{P}^4}$, we can assume that $g=h$ and $l=m$ in the
equation of $C$, in \eqref{formula: equation 5 in P4 quadric}.
Hence the involution
$$\iota_{\mathbb{P}^4}:(x_0:x_1:x_2:x_3:x_4)\ra (x_0:x_4:x_3:x_2:x_1)$$
fixes $Q$ and $C$. So its restriction to
$X$ is an involution, $\iota$, of $X$. Moreover $\iota$ has eight
fixed points (six on the plane $x_1=x_4, x_2=x_3$ and two on the
line $x_0=0$, $x_1+x_4=0$, $x_2+x_3=0$). Thus $\sigma_5$ and
$\iota$ are symplectic automorphisms of the K3 surface $X$ and
they generate the group $\Dh_5$.

\subsection{$L^2=8$}
This polarization gives a map to $\mathbb{P}^5$ where the K3
surfaces are realized as complete intersections three
quadrics.\\
Let us consider the map
$$\sigma_{\mathbb{P}^5}:(x_0:x_1:x_2:x_3:x_4:x_5)\ra (x_0:x_1:\omega x_2:\omega^2 x_3:\omega^3 x_4:\omega^4 x_5)$$
and the complete intersection of the quadrics
\begin{eqnarray*}\label{formula: equation 5 in P5
quadric}
\begin{array}{lll} Q_1':=&V(ax_0^2+bx_0x_1+cx_1^2+dx_2x_5+ex_3x_4),\\
Q_2':=&V(fx_1x_2+gx_3x_5+hx_4^2),\\
Q_3':=&V(lx_1x_5+mx_2x_4+nx_3^2).
\end{array}\end{eqnarray*}
The group of automorphisms of $\mathbb{P}^5$ commuting with
$\sigma_{\mathbb{P}^5}$ is $(GL(2)\times GL(1)^4)/GL(1)$ which has
dimension $7=8-1$. So these complete intersections in
$\mathbb{P}^5$ have $(5-1)+(4-1)+(4-1)-(8-1)=3$ moduli. Up to
automorphisms of $\mathbb{P}^5$ commuting with
$\sigma_{\mathbb{P}^5}$ we can assume that the quadrics have the
following equation:
\begin{eqnarray*}
\begin{array}{lll} Q_1:=&V(x_0^2+bx_0x_1+x_1^2+dx_2x_5+ex_3x_4),\\
Q_2:=&V(x_1x_2+x_3x_5+x_4^2),\\
Q_3:=&V(x_1x_5+x_2x_4+x_3^2),
\end{array}\end{eqnarray*}
and in fact they depend on 3 parameters.\\
The complete intersection $X$ of the quadrics $Q_1$, $Q_2$, $Q_3$
is smooth for a generic choice of the parameters $b,d,e$ (one can
check it directly putting $e=b=0$, $d=1$). Moreover $X$ is
invariant under the automorphism $\sigma_{\mathbb{P}^5}$, so
$\sigma_{\mathbb{P}^5}$ induces an automorphism $\sigma_5$ on $X$
and $\sigma_5$ is symplectic (this can be shown as in the case $L^2=6$).\\
The involution
$$\iota_{\mathbb{P}^5}:(x_0:x_1:x_2:x_3:x_4:x_5)\ra (x_0:x_1:x_5:x_4:x_3:x_2)$$
fixes the quadric $Q_1$ and switches the quadrics $Q_2$ and $Q_3$.
So its restriction to the K3 surface $X$ is an involution,
$\iota$, of the surface $X$. Moreover $\iota$ has eight fixed
points, on the space $x_2=x_5$, $x_3=x_4$. Thus $\sigma_5$ and
$\iota$ are symplectic automorphisms of the K3 surface $X$ and
they generate the group $\Dh_5$.

\subsection{$L^2=2$}
This polarization gives a $2:1$ map to $\mathbb{P}^2$ and the K3
surfaces are realized as double cover of $\mathbb{P}^2$ branched along a sextic plane curve.\\
The map
$$\sigma_{\mathbb{P}^2}:(x_0:x_1:x_2)\ra (x_0:\omega x_1:\omega^4 x_2)$$
is an automorphism of $\mathbb{P}^2$. Up to projectivity of $\mathbb{P}^2$ commuting with $\sigma_{\mathbb{P}^2}$, the invariant sextic for $\sigma_{\mathbb{P}^2}$ are 
\begin{eqnarray*}\mathcal{C}_6:= V(x_0^6+x_0x_1^5+x_0x_2^5+ax_0^4x_1^2x_2^2+bx_0^3x_1^3x_2^3+cx_1^3x_2^3)
\end{eqnarray*}
Let $X$ be the double cover of $\mathbb{P}^2$ branched along $\mathcal{C}_6$, i.e. $X$ is $V(u^2-(x_0^6+x_0x_1^5+x_0x_2^5+ax_0^4x_1^2x_2^2+bx_0^3x_1^3x_2^3+cx_1^3x_2^3))$ in the weighted projective space $\mathbb{PW}(3,1,1,1)$.
The automorphism $\sigma_{\mathbb{P}^2}$ lifts to a symplectic automorphism $\sigma_5:(u:x_0:x_1:x_2)\ra (u:x_0:\omega x_1:\omega^4 x_2)$ of  $X$. So we constructed the 3-dimensional family of K3 surfaces which are double covers of $\mathbb{P}^2$ and have a symplectic automorphism of order 5 which leavs invarinat the polarization.\\
The involution $$\alpha_{\mathbb{P}^2}:(x_0:x_1:x_2)\ra (x_0:x_2:x_1)$$ leaves the curve $\mathcal{C}_6$ invariant, so it lifts to an involution $\alpha_X:(u:x_0:x_1:x_2)\ra (u:x_0:x_2:x_1)$ of the surface $X$. The involution $\alpha_X$ fixes a curve (the pull back of the line $x_1=x_2$ in $\mathbb{P}^2$), so it is not symplectic. Let $i:(u:x_0:x_1:x_2)\ra (-u:x_0:x_1:x_2)$ be the covering involution on $X$. It is a non symplectic involution (indeed the quotient $X/\iota$ is rational) and it commutes both with $\alpha_X$ and $\sigma_5$. The involution $\iota=\alpha_X\circ i$ is a symplectic involution on $X$ (because it is the composition of two commuting non symplectic involutions). Moreover one has $\iota\circ \sigma_5=\sigma_5^{-1}\circ\iota$, and hence $\Dh_5=\langle \sigma_5, \iota\rangle$ acts symplectically on $X$.

\bibliographystyle{amsplain}

\end{document}